\documentclass[12pt]{article}

\usepackage{amsmath,amssymb,amsthm,amsfonts,diagrams}
\usepackage{nicematrix,tikz}
\usetikzlibrary{fit}
\tikzset{highlight/.style={rectangle,
fill=gray!50,
rounded corners = 0.5 mm, 
inner sep=1pt,
fit=#1}}

\def\Hom{{\rm Hom}}
\def\ind{{\rm ind}}
\def\ad{\mathop{\rm ad}}
\def\im{\mathop{\rm im}}

\def\phi{\varphi}
\def\g{\mathfrak g}

\def\h{\mathfrak h}
\def\F{\mathbb F}

\makeatletter \let\@@pmod\pmod
\DeclareRobustCommand{\pmod}{\@ifstar\@pmods\@@pmod}
\def\@pmods#1{\mkern4mu({\operator@font mod}\mkern 6mu#1)}
\makeatother

\makeatletter
\def\foo#1\endgraf\unskip#2\foo{\def\row@to@buffer{#1\endgraf\unskip\unskip#2}}
\expandafter\foo\row@to@buffer\foo
\makeatother

\newtheorem{theorem}{Theorem}[section]
\newtheorem{lemma}[theorem]{Lemma}

\newtheorem{definition}[theorem]{Definition}
\newtheorem{rem}[theorem]{Remark}

\theoremstyle{remark}

\providecommand{\keywords}[1]{\noindent{Keywords:} #1}
\providecommand{\classify}[1]{\noindent{Mathematics Subject
    Classification:} #1}

\title{Cohomology of Restricted Twisted Heisenberg Lie Algebras}

\author{
Yong Yang\\
College of Mathematics and System Science\\
Xinjiang University\\
Urumqi 830046, China\\
yangyong195888221@163.com}

\date{}

\begin{document}
\maketitle

\begin{abstract}
  Over an algebraically closed field $\F$ of
  characteristic $p>0$, the restricted twisted Heisenberg Lie
  algebras  are studied. We use the 
  Hochschild-Serre spectral sequence relative to its Heisenberg ideal to compute the trivial cohomology. 
 The ordinary 1- and 2-cohomology spaces are used  to compute the restricted 1- and 2-cohomology
  spaces  and describe the
  restricted 1-dimensional central extensions, including explicit
  formulas for the Lie brackets and $-^{[p]}$-operators. 
\end{abstract}

{\footnotesize
\keywords{restricted Lie algebra;
  twisted Heisenberg algebra; (restricted) cohomology; central extension}

\classify{17B50; 17B56}}

\section{Introduction}
The definition of  twisted Heisenberg Lie algebras was motivated by the connection with differential geometry. More specifically, the twisted Heisenberg Lie algebra is 
 the tangent algebra of a twisted Heisenberg group (certain semidirect product of a Heisenberg group with the real numbers) \cite{T}.
 Its importance is due to the relationship between twisted Heisenberg algebras and Lorentzian manifolds, for examples see \cite{A}, \cite{G}. Algebraically, the twisted Heisenberg Lie algebra can be viewed as a certain 1-dimensional extension of the corresponding  Heisenberg Lie algebra as shown in the following definitions:
 \begin{definition}\rm
  For a non-negative integer $m$, the \emph{Heisenberg Lie algebra}
  $\mathfrak{h}_{m}$ is the $(2m+1)$-dimensional vector space over a
  field $\F$ spanned by the elements
  \[\{e_1,\ldots , e_{2m},e_{2m+1}\}\] with the non-vanishing Lie
  brackets
  \[[e_i,e_{m+i}]=e_{2m+1},\] for $1\le i\le m$.
\end{definition} 
 
\begin{definition}\rm
  For a non-negative integer $m$, the \emph{twisted Heisenberg Lie algebra}
  $\mathfrak{h}^{\lambda}_{m}$, parameterized by $\lambda=(\lambda_1,\ldots,\lambda_m)\in(\F^{\times})^{m}$, is the $(2m+2)$-dimensional vector space over a
  field $\F$ spanned by the elements
  \[\{e_1,\ldots , e_{2m},e_{2m+1},e_{2m+2}\}\] with the non-vanishing Lie
  brackets
  \[[e_i,e_{m+i}]=e_{2m+1},\ [e_{2m+2},e_i]=\lambda_ie_{m+i},\ [e_{2m+2},e_{m+i}]=\lambda_ie_i\] for $1\le i\le m$.
\end{definition}

The
cohomology with trivial coefficients of Heisenberg Lie algebras 
was computed by Santharoubane  \cite{S}  over
fields of characteristic zero  in 1983.
  In 2005, Sk\"oldberg  \cite{SK} used algebraic
Morse theory to compute the Poincar\'e polynomial of Heisenberg Lie algebras
 over fields of characteristic $p=2$. In 2008,  Cairns and
 Jambor \cite{CJ} gave the $n$-th Betti number of
$\mathfrak{h}_m$ over  fields of  prime characteristic for $n \le m$.

 In 1937, Jacobson  \cite{J} introduced the notion of restricted Lie algebras over fields of positive
characteristic and proved that  a Lie algebra is restrictable precisely when all $p$-powers of the inner derivations are still inner. 
This gives us many examples of restricted Lie algebras, like 2-step nilpotent Lie algebras,
 the derivation algebra
$\text{Der}(A)$ of any algebra $A$, the Lie algebra of an algebraic
group, the primitive elements of an irreducible co-commutative Hopf
algebra, etc. However certain technical tools are available for
restricted Lie algebras which are not available for  arbitrary modular Lie algebras, see \cite{Se} for examples.

The restricted structures on $\mathfrak{h}_{m}$ were  classified and characterized by Jacobson's theorem in \cite{EFY}. However much less is known about  restricted  twisted Heisenberg algebras.
In this note, we 
classify the family of non-isomorphic restricted structures
on the twisted Heisenberg Lie algebra $\mathfrak{h}_m^{\lambda,\mu}$
parameterized by
the elements $\mu \in \F^{2m+2}$. 
By 
considering $\mathfrak{h}_m^{\lambda,\mu}$ as a restricted one-dimensional
central extension of the restricted Heisenberg Lie algebra,
we compute the ordinary and
restricted cohomology spaces 
$H^q(\mathfrak{h}_m^{\lambda,\mu})$ and
$H_*^q(\mathfrak{h}_m^{\lambda,\mu})$ for $q=1,2$, and  give the bracket structures and
$[p]$-operators for the corresponding restricted one-dimensional
central extensions of $\mathfrak{h}_m^{\lambda,\mu}$.

\section{Restricted Lie algebras and cohomology}

In this section, we recall the definition of a restricted Lie algebra,
the Chevalley-Eilenberg cochain complex,
and the restricted cochain complex \cite{EFu}. Everywhere in this
section, $\F$ denotes an algebraically closed field of characteristic
$p$ and $\g$ denotes a finite dimensional Lie algebra over $\F$ with
an ordered basis $\{e_1,\dots , e_n\}$.  For $j\geq 2$ and $g_1,\ldots,g_ j\in \mathfrak{g}$, we denote the $j$-fold bracket
$$[g_1,g_2,g_3,\ldots,g_j]=[[\ldots[[g_1,g_2],g_3],\ldots],g_j].$$

\subsection{Restricted Lie Algebras}
Suppose that $\mathrm{char}\ \F=p>0$. A \emph{restricted Lie algebra} $\mathfrak{g}$ over $\F$ is a Lie
algebra $\mathfrak{g}$ over $\F$ together with a mapping
$\cdot^{[p]}:\g\to\g$, written $g\mapsto g^{[p]}$, such that for all
$a\in\F$, and all $g,h\in \mathfrak{g}$
\begin{itemize}
\item[(1)] $(a g)^{[p]}=a^{p}g^{[p]}$;
\item[(2)]
  $(g+h)^{[p]}=g^{[p]}+h^{[p]}+ \sum\limits_{i=1}^{p-1} s_i(g,h)$,
  where $is_i(g,h)$ is the coefficient of $t^{i-1}$ in the formal expression
  $(\mathrm{ad}(tg+h))^{p-1}(g)$; and
\item[(3)] $(\mathrm{ad}\ g)^{p}=\mathrm{ad}\ g^{[p]}$.
\end{itemize}

The mapping $\cdot^{[p]}$ is called a \emph{$[p]$-operator} on
$\mathfrak{g}$ and a  Lie algebra is \emph{restrictable} if it is possible to define a
$[p]$-operator on it. By Jacobson's theorem  \cite{J,SF},  a  finite dimensional Lie algebra $\g$ is
restrictable  if and only if it admits a basis $\{e_1,\dots, e_n\}$ such that $(\ad e_i)^p$ is an inner derivation for
all $1\le i\le n$. Choosing $e_i^{[p]}\in\g$ with
$\ad e_i^{[p]}=(\ad e_i)^p$ for all $i$ completely determines a
$[p]$-operator on $\mathfrak{g}$.

%From now on, $\mathfrak{g}$ denotes a restricted Lie algebra.

\subsection{Chevalley-Eilenberg Lie algebra cohomology}

We only describe the
Chevalley-Eilenberg cochain spaces $C^q(\g)=C^q(\g,\F)$
for $q=0,1,2,3$ and differentials $d^q:C^q(\g)\to C^{q+1}(\g)$
for $q=0,1,2$. For more details on the Chevalley-Eilenberg cochain
complex we refer the reader to \cite{ChE} and \cite{F}. Set
$C^0 (\g)=\F$ and $C^q (\g)= (\wedge^q\g)^*$ for $q=1,2,3$. We will use the
following bases, ordered lexicographically, throughout the paper.
\begin{align*}
  C^0 (\g):& \{1\}&\\
  C^1 (\g):&  \{e^k\ |\ 1\le k\le n\}&\\
  C^2 (\g):& \{e^{i,j}\ |\ 1\le i<j\le n\}&\\
  C^3 (\g):& \{e^{u,v,w}\ |\ 1\le u<v<w\le n\}&\\
\end{align*}
Here $e^k$, $e^{i,j}$ and $e^{u,v,w}$ denote the dual vectors of the
basis vectors $e_k\in \g$,
$e_{i,j}=e_i\wedge e_j\in \wedge^2\g $ and
$e_{u,v,w}=e_u\wedge e_v\wedge e_w\in \wedge^3\g$, respectively.
The differentials $d^q:C^q (\g)\to C^{q+1}(\g)$ are defined for $\psi\in C^1 (\g)$,
$\phi\in C^2 (\g)$ and $g,h,f\in\g$ by\small
\begin{align*}
  d^0: C^0 (\g)\to C^1 (\g),  &\  d^0=0&\\
  d^1:C^1 (\g)\to C^2 (\g), &\   d^1(\psi)(g\wedge h)=\psi([g,h])&\\
  d^2:C^2 (\g)\to C^3 (\g), &\  d^2(\phi)(g\wedge h\wedge f)=\phi([g,h]\wedge f)-\phi([g,f]\wedge h)+\phi([h,f]\wedge g). &\\
\end{align*}\normalsize
The mappings $d^q$ satisfy $d^{q}d^{q-1}=0$ and 
$H^q(\g)=H^q(\g,\F)=\ker(d^q)/\im(d^{q-1})$.

\subsection{Restricted Lie algebra cohomology}

In this subsection, we recall the definitions and results on the  
restricted cochain complex  only for the case of trivial
coefficients.
For a more general
treatment of the cohomology of restricted Lie algebras, we refer the reader to
 \cite{EFu} and \cite{Fe}.

Given $\phi\in C^2(\g)$, a mapping $\omega:\g\to\F$ is {\bf
  $\phi$-compatible} if for all $g,h\in\g$ and all $a\in\F$
\\
  
$\omega(a g)=a^p \omega (g)$ and
\begin{equation}
  \label{starprop}
  \omega(g+h)=\omega(g)+\omega(h) + \sum_{\substack{g_i=\mbox{\rm\scriptsize $g$
        or $h$}\\ g_1=g, g_2=h}}
  \frac{1}{\#(g)}\phi([g_1,g_2,g_3,\dots,g_{p-1}]\wedge g_p)
\end{equation}
where $\#(g)$ is the number of factors $g_i$ equal to $g$. We define
\[\Hom_{\rm Fr}(\g,\F) = \{f:\g\to\F\ |\ f(a x+b
  y)=a^pf(x)+b^pf(y),a,b\in\F; x,y\in \g\}\]  to be
the space of {\it Frobenius homomorphisms} from $\g$ to $\F$. A mapping
$\omega:\g\to\F$ is $0$-compatible if and only if
$\omega\in \Hom_{\rm Fr}(\g,\F)$.

Given $\phi\in C^2(\g)$, we can define a mapping
$\omega: \mathfrak{g} \to \F $ that is $\phi$-compatible by assigning the values of $\omega$ arbitrarily on a
basis for $\mathfrak{g}$ and use (1) to extend it to  $\mathfrak{g}$ and this $\omega$ is  unique because its values are completely
determined by $\omega(e_i)$ and $\phi$ (c.f. \cite{EFi2}). In
particular, given $\phi$, we can define $\tilde\phi(e_i)=0$ for all $i$ and use
(\ref{starprop}) to determine a unique $\phi$-compatible mapping
$\widetilde\phi:\g\to\F$. Note that, in general, $\widetilde\phi\ne 0$
but $\widetilde\phi (0)=0$. Moreover, If $\phi_1,\phi_2\in C^2(\g)$
and $a\in\F$, then
$\widetilde{(a\phi_1+\phi_2)} = a\widetilde\phi_1 + \widetilde\phi_2$.
\\

Given $\zeta\in C^3(\g)$,  a mapping $\eta:\g\times \g\to\F$ is {\bf
  $\zeta$-compatible} if for all $a\in\F$ and all $g,h,h_1,h_2\in\g$,
$\eta(\cdot,h)$ is linear in the first coordinate,
$\eta(g,a h)=a^p\eta(g,h)$ and
\begin{align*}
  \eta(g,h_1+h_2) &=
                    \eta(g,h_1)+\eta(g,h_2)-\nonumber \\
                  & \sum_{\substack{l_1,\dots,l_p=1 {\rm or} 2\\ l_1=1,
  l_2=2}}\frac{1}{\#\{l_i=1\}}\zeta (g\wedge
  [h_{l_1},\cdots,h_{l_{p-1}}]\wedge h_{l_{p}}).
\end{align*}
The restricted cochain spaces are defined as $C^0_*(\g)=C^0 (\g)$,
$C^1_*(\g)=C^1 (\g)$,
\[C^2_*(\g)=\{(\phi,\omega)\ |\ \phi\in C^2 (\g), \omega:\g\to\F\
  \mbox{\rm is $\phi$-compatible}\}\]
\[C^3_*(\g)=\{(\zeta,\eta)\ |\ \zeta\in C^3 (\g),
  \eta:\g\times\g\to\F\ \mbox{\rm is $\zeta$-compatible}\}.\] % f
% $\phi\in C^2 (\g)$, we can assign values $\omega (e_i)$ arbitrarily
% to basis elements $e_i$ for $\g$. Set
% $\omega(\alpha e_i)=\alpha^p\omega(e_i)$ for all $\alpha\in\F$ and
% use (\ref{starprop}) to determine a unique $\phi$-compatible map
% $\omega:\g\to\F$ \cite{EFi2}.

For $1\le i\le n$, define $\overline e^i:\g\to\F$ by
$\overline e^i \left(\sum_{j=1}^n a_j e_j\right ) = a_i^p.$ The set
$\{\overline e^i\ |\ 1\le i\le n\}$ is a basis for
$\Hom_{\rm Fr}(\g,\F)$. Since
\[\dim C^2_*(\g) = \binom{\dim\g+1}{2}=\binom{\dim\g}{2}+\dim\g,\]
we have that
\begin{equation*}
  \{(e^{i,j},\widetilde{e^{i,j}})\ |\ 1\le i<j\le n\} \cup \{(0,\overline e^i)\ |\ 1\le i\le n\}
\end{equation*}
is a basis for $C^2_*(\g)$. We will use this basis in the following computations.

Define $d_*^0=d^0$. For $\psi\in C^1_*(\g)$, define the mapping
$\ind^1(\psi):\g\to\F$ by $\ind^1(\psi)(g)=\psi(g^{[p]})$. The mapping
$\ind^1(\psi)$ is $d^1(\psi)$-compatible for all $\psi\in C^1_*(\g)$,
and the differential $d^1_*:C^1_*(\g)\to C^2_*(\g)$ is defined by
\begin{equation}
  d^1_*(\psi) = (d^1(\psi),\ind^1(\psi)).
\end{equation}
For $(\phi,\omega)\in C^2_*(\g)$, define the mapping
$\ind^2(\phi,\omega):\g \times\g \to\F$ by the formula
\[\ind^2(\phi,\omega)(g,h)=\phi(g\wedge h^{[p]})-\varphi([g,\underbrace{h,\ldots,h}_{p-1}],h).\]
The mapping $\ind^2(\phi,\omega)$ is $d^2(\phi)$-compatible for all
$\phi\in C^2(\g)$, and the differential $d^2_*:C^2_*(\g)\to C^3_*(\g)$
is defined by
\begin{equation}
  d^2_*(\phi,\omega) =
  (d^2(\phi),\ind^2(\phi,\omega)). 
\end{equation}
These mappings $d_*^q$ satisfy $d_*^{q}d_*^{q-1}=0$ and we define
\[H_*^q(\g)=H_*^q(\g,\F)=\ker(d_*^q)/\im(d_*^{q-1}). \]

Note that (with trivial coefficients) if $\omega_1$ and $\omega_2$ are
both $\phi$-compatible, then
$\ind^2(\phi, \omega_1)=\ind^2(\phi, \omega_2)$.

\begin{lemma}\cite{EFY}
  \label{swap}
  If $(\phi,\omega)\in C^2_*(\g)$ and $\phi=d^1(\psi)$ with
  $\psi\in C^1 (\g)$, then $(\phi,\ind^1(\psi))\in C^2_*(\g)$ and
  $\ind^2(\phi,\omega)=\ind^2(\phi,\ind^1(\psi))$.
\end{lemma}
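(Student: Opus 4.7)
My plan is to dispatch both assertions of the lemma by appealing to facts established in the preceding exposition; essentially no new computation is required.

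For the first assertion, that $(\phi, \ind^1(\psi)) \in C^2_*(\g)$, I would note that the paragraph introducing $d^1_*$ asserts that $\ind^1(\psi)$ is $d^1(\psi)$-compatible for every $\psi \in C^1(\g)$, equivalently that $d^1_*(\psi) = (d^1(\psi), \ind^1(\psi))$ lies in $C^2_*(\g)$. Since $\phi = d^1(\psi)$ by hypothesis, the pair $(\phi, \ind^1(\psi))$ is indeed an element of $C^2_*(\g)$.

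For the second assertion, I would invoke the remark immediately preceding the lemma: with trivial coefficients, if $\omega_1$ and $\omega_2$ are both $\phi$-compatible, then $\ind^2(\phi, \omega_1) = \ind^2(\phi, \omega_2)$. This is apparent from the displayed formula for $\ind^2(\phi,\omega)$, whose right-hand side involves only $\phi$ and the $[p]$-operator of $\g$ and makes no reference to $\omega$. Applying this with $\omega_1 = \omega$ (which is $\phi$-compatible by hypothesis) and $\omega_2 = \ind^1(\psi)$ (which is $\phi$-compatible by the first part of the argument) yields the desired equality $\ind^2(\phi, \omega) = \ind^2(\phi, \ind^1(\psi))$.

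There is no genuine obstacle: the trivial-coefficient hypothesis is precisely what makes $\ind^2$ depend only on $\phi$, and the entire content of the lemma is a direct consequence of the setup. The lemma is best viewed as a convenient naming of this fact, presumably to enable one to substitute $\ind^1(\psi)$ for a given compatible $\omega$ when analyzing $H^2_*(\g)$ at a coboundary $\phi = d^1(\psi)$.
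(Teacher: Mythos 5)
Your argument is correct and is essentially the intended one: the paper does not reprove this lemma (it cites \cite{EFY}), but the two facts you invoke --- that $\ind^1(\psi)$ is $d^1(\psi)$-compatible for every $\psi\in C^1(\g)$, and that with trivial coefficients $\ind^2(\phi,\omega)$ does not depend on the choice of $\phi$-compatible $\omega$ because the displayed formula for $\ind^2$ never references $\omega$ --- are stated explicitly in the paragraphs immediately preceding the lemma, and combining them exactly as you do yields the claim. There are no gaps.
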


If $\g$ is a  restricted Lie algebra, there is a six-term exact sequence \cite{H,Viv} that
relates the ordinary and restricted 1- and 2-cohomology spaces:
\begin{diagram}[LaTeXeqno]
  \label{sixterm}
  0 &\rTo &H^1_*(\g)&\rTo &H^1(\g)&\rTo&\Hom_{\rm Fr}(\g,\F) & \rTo \\
  & \rTo & H^2_*(\g)&\rTo &H^2(\g)&\rTo^\Delta&\Hom_{\rm
    Fr}(\g,H^1(\g)). &
\end{diagram}
The mapping
\[\Delta: H^2(\g)\to\Hom_{\rm Fr}(\g,H^1(\g))\] in
(\ref{sixterm}) is given explicitly by
\begin{equation}\label{delta}\Delta_\phi(g)\cdot h =\varphi(g\wedge[h,\underbrace{g,\ldots,g}_{p-1}])-\phi(g^{[p]}\wedge
  h)
    \end{equation}
where $g,h\in\g$.

\section{Ordinary cohomology $H^q(\h_m^{\lambda})$ for $q=1,2$}
Suppose $I$ is an ideal of a Lie algebra $\mathfrak{g}$. Then there is a convergent spectral sequence called the Hochschild-Serre spectral sequence \cite{HS} such that
\begin{equation*}
E_{2}^{r,s}=H^{r}(\mathfrak{g}/I,H^{s}(I))\Longrightarrow H^{r+s}(\mathfrak{g}).
\end{equation*}

Here we associate cohomology of a twisted Heisenberg Lie algebra with cohomology of its Heisenberg Lie algebra by means of Hochschild-Serre spectral sequences.
Regarding $\h_{m}$ as an ideal of $\h_{m}^{\lambda}$, we  have
\begin{equation*}
E_{2}^{r,s}=H^{r}(\h_{m}^{\lambda}/\h_{m},H^{s}(\h_{m}))=H^{r}(\F e_{2m+2},H^{s}(\h_{m}))\Longrightarrow H^{r+s}(\h_{m}^{\lambda}).
\end{equation*}
So we have $E_{\infty}^{r,s}=E_{2}^{r,s}=0$ for $r\geq 2$ and $E_{\infty}^{0,s}=E_{2}^{0,s}$, $E_{\infty}^{1,s}=E_{2}^{1,s}$. Then
 we have the following lemma.

\begin{lemma}\label{1.1}
$H^{k}(\h_{m}^{\lambda})=
H^{k}(\h_{m})^{\F e_{2m+2}}\oplus H^{1}(\F e_{2m+2},H^{k-1}(\h_{m}))$ for $k\geq 0$.
\end{lemma}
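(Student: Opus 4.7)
The plan is to extract the lemma directly from the Hochschild--Serre degeneration already recorded in the paragraph above the statement. The key inputs $E_\infty^{r,s}=E_2^{r,s}=0$ for $r\ge 2$ and $E_\infty^{r,s}=E_2^{r,s}$ for $r=0,1$ come from the fact that $\h_m^\lambda/\h_m\cong\F e_{2m+2}$ is one-dimensional: its Chevalley--Eilenberg complex with coefficients in any module $N$ is the two-term complex $N\xrightarrow{e_{2m+2}\cdot}N$, which forces $H^r(\F e_{2m+2},N)=0$ for all $r\ge 2$. All that remains is the bookkeeping required to convert this degeneration into a direct sum.

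First I would write down the decreasing filtration $F^\bullet H^k(\h_m^\lambda)$ produced by the spectral sequence, with associated graded $F^rH^k/F^{r+1}H^k\cong E_\infty^{r,k-r}$. Since the only potentially non-zero graded pieces occur at $(r,s)=(0,k)$ and $(r,s)=(1,k-1)$, the filtration collapses to $0=F^2H^k\subseteq F^1H^k\subseteq F^0H^k=H^k(\h_m^\lambda)$, yielding the short exact sequence
\[0\longrightarrow E_\infty^{1,k-1}\longrightarrow H^k(\h_m^\lambda)\longrightarrow E_\infty^{0,k}\longrightarrow 0.\]
Identifying the ends via $E_\infty^{0,k}=H^0(\F e_{2m+2},H^k(\h_m))=H^k(\h_m)^{\F e_{2m+2}}$ and $E_\infty^{1,k-1}=H^1(\F e_{2m+2},H^{k-1}(\h_m))$ rewrites this as the short exact sequence predicted by the lemma.

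Finally, because this is a short exact sequence of $\F$-vector spaces it splits (non-canonically), producing the asserted direct sum. The boundary case $k=0$ is handled by the convention $H^{-1}(\h_m)=0$, which kills the second summand. There is no genuine obstacle: the content lies entirely in the spectral-sequence degeneration, and the passage to a direct sum is pure vector-space linear algebra. The only step worth double-checking is that no differential $d_r$ with $r\ge 2$ can be non-zero, but this is immediate since every such $d_r$ has either source or target of the form $E_r^{r',\,\cdot}$ with $r'\ge 2$, which is already zero on the $E_2$ page.
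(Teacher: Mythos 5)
Your proof is correct and follows exactly the route the paper takes: the paper gives no separate proof of Lemma~\ref{1.1}, deriving it directly from the preceding observation that $E_2^{r,s}=H^r(\F e_{2m+2},H^s(\h_m))$ vanishes for $r\ge 2$ so the Hochschild--Serre spectral sequence degenerates at $E_2$ with a two-step filtration. You have merely made explicit the steps the paper leaves implicit (vanishing of the higher differentials, the short exact sequence from the filtration, and the splitting over a field), all of which check out.
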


The cohomology of Heisenberg Lie algebras $\h_m$ with trivial
coefficients was first studied by Santharoubane in \cite{S} (in
characteristic $0$) and by Cairns and Jambor in \cite{CJ} (in
characteristic $p>0$).

\begin{lemma}\label{H}\cite{S,CJ}
  For $q\leq m$,

  (1) if $\mathrm{char}\ \F=0$,
  \[H^{q}(\h_m)=\bigwedge^{q}(\h_m/\F e_{2m+1})^{\ast}/(d
    e^{2m+1}\wedge\bigwedge^{q-2}(\h_m/\F e_{2m+1})^{\ast});\]

  (2) if $\mathrm{char}\ \F=p>0$,
  \begin{align*}
    H^{q}(\h_m)&=\bigwedge^{q}(\h_m/\F
                 e_{2m+1})^{\ast}/(de^{2m+1}\wedge\bigwedge^{q-2}(\h_m/\F
                 e_{2m+1})^{\ast}) \\
               &\oplus((d e^{2m+1})^{p-1}\wedge
                 e^{2m+1}\wedge\bigwedge^{q-2p+1}(\h_m/\F
                 e_{2m+1})^{\ast})
  \end{align*}
  where $d e^{2m+1}=\sum^{m}_{i=1}e^{i,m+i}$.
\end{lemma}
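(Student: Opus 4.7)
The plan is to exploit the simple shape of the Chevalley--Eilenberg differential on $\h_m$. Writing $V = \h_m/\F e_{2m+1}$ and identifying $V^*$ with $\mathrm{span}(e^1,\ldots,e^{2m}) \subseteq \h_m^*$, the only nonzero bracket of $\h_m$ yields $de^k = 0$ for $k \leq 2m$ and $de^{2m+1} = \omega := \sum_{i=1}^m e^i\wedge e^{m+i}$, a symplectic form on $V$. Every $q$-cochain decomposes uniquely as $\alpha + \beta\wedge e^{2m+1}$ with $\alpha \in \bigwedge^q V^*$ and $\beta \in \bigwedge^{q-1}V^*$, and since $d$ is a graded derivation we obtain
\[
d(\alpha + \beta\wedge e^{2m+1}) = (-1)^{q-1}\,\beta\wedge\omega \;\in\; \bigwedge^{q+1}V^*.
\]
Setting $L := \omega\wedge(-)$, this immediately produces the decomposition
\[
H^q(\h_m) \;\cong\; \frac{\bigwedge^q V^*}{L(\bigwedge^{q-2}V^*)} \;\oplus\; \ker\!\bigl(L\colon \bigwedge^{q-1}V^* \to \bigwedge^{q+1}V^*\bigr),
\]
so the entire lemma reduces to describing the kernel of $L$ in the indicated degree.

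For part (1), in characteristic zero I would invoke the Lefschetz $\sl_2$-triple $(L,\Lambda,H)$ on $\bigwedge^*V^*$ (with $\Lambda$ the symplectic dual contraction and $H = \deg - m$). By Weyl's theorem the module is semisimple, so hard Lefschetz applies and $L^{m-k}\colon \bigwedge^k V^* \to \bigwedge^{2m-k}V^*$ is an isomorphism for $k\leq m$. In particular $L$ is injective on $\bigwedge^{q-1}V^*$ whenever $q\leq m$, the kernel summand vanishes, and statement (1) follows.

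For part (2), the crucial new input is that $\omega^p = 0$ in characteristic $p$: only squarefree monomials in the pairwise-commuting degree-$2$ elements $e^i\wedge e^{m+i}$ survive in the expansion of $\omega^p$, and these all carry a $p!\equiv 0\pmod{p}$ prefactor. Consequently $\omega^{p-1}\wedge\gamma$ lies in $\ker L$ for every $\gamma\in\bigwedge^{q-2p+1}V^*$, giving the inclusion $\supseteq$ in the claimed description of $\ker L$ on $\bigwedge^{q-1}V^*$. The main obstacle is the reverse inclusion: since $\sl_2$-modules are not semisimple in characteristic $p$, hard Lefschetz fails and one must argue directly. A natural approach is to organise monomials by their pattern of symplectic pairs $\{j, m+j\}$: after fixing which pairs are ``half-filled'' and on which side, the remaining subspace is spanned by products of the commuting square-zero degree-$2$ elements $y_j := e^j\wedge e^{m+j}$ for the remaining indices, on which $L$ restricts to multiplication by $s := \sum_j y_j$ inside a truncated algebra $\F[y_1,\dots,y_n]/(y_j^2)$. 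One then checks block by block, using $s^p = 0$ together with the restriction $q\leq m$ which keeps all relevant degrees strictly below the middle, that no kernel elements appear beyond those generated by $s^{p-1}$. Reassembling over all block types produces precisely the summand $\omega^{p-1}\wedge e^{2m+1}\wedge\bigwedge^{q-2p+1}V^*$ and completes (2). This blockwise combinatorial verification is the heart of the Cairns--Jambor argument and the main technical obstacle to a short self-contained proof.
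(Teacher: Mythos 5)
First, note that the paper itself offers no proof of this lemma: it is quoted verbatim from Santharoubane \cite{S} and Cairns--Jambor \cite{CJ}, so there is no internal argument to compare against. Your reduction is the standard one and is correct: with $V=\h_m/\F e_{2m+1}$, every $q$-cochain is $\alpha+\beta\wedge e^{2m+1}$, the Chevalley--Eilenberg differential (being the antiderivation extending $d^1$) sends it to $(-1)^{q-1}\beta\wedge\omega$ with $\omega=\sum_{i=1}^m e^{i,m+i}$, and hence $H^q(\h_m)\cong \bigl(\bigwedge^qV^*/\,\omega\wedge\bigwedge^{q-2}V^*\bigr)\oplus\ker\bigl(L\colon\bigwedge^{q-1}V^*\to\bigwedge^{q+1}V^*\bigr)\wedge e^{2m+1}$. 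Part (1) then follows from hard Lefschetz for a symplectic vector space exactly as you say, and your observation that the condition $q\le m$ keeps each block strictly below its middle degree is also correct: if $|S|$ pairs are half-filled, the residual $y$-degree is $(q-1-|S|)/2<(m-|S|)/2$.

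The genuine gap is in part (2). You prove only the easy inclusion $\omega^{p-1}\wedge\bigwedge^{q-2p+1}V^*\subseteq\ker L$, which follows from $\omega^p=0$ in characteristic $p$. The reverse inclusion --- that $\ker L$ on $\bigwedge^{q-1}V^*$ contains \emph{nothing else} when $q\le m$ --- is the entire mathematical content of the characteristic-$p$ statement, and your proposal replaces it with the sentence ``one then checks block by block \dots\ that no kernel elements appear beyond those generated by $s^{p-1}$.'' That check is precisely the theorem of Cairns and Jambor: it amounts to determining the mod-$p$ kernel of multiplication by $s=\sum_j y_j$ on the truncated algebra $\F[y_1,\dots,y_n]/(y_j^2)$ (equivalently, of the up-operator on the Boolean lattice over $\F_p$) below the middle degree. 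This does not follow formally from $s^p=0$, and no mechanism for carrying it out is supplied; over $\F_p$ the Lefschetz-type injectivity genuinely fails, and identifying the kernel requires either the inductive filtration argument of \cite{CJ} or input such as Wilson's rank formula for inclusion matrices. As written, your argument establishes part (1) and one inclusion of part (2); for the remaining step you must either supply that combinatorial computation or, as the paper does, simply cite \cite{CJ}.
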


If  $q=1$ or $q=2$, then the second term in part (2) of
Lemma~\ref{H} vanishes so the cohomology spaces do not depend on $p$.
In particular
\begin{equation}\label{o1}
H^1(\h_m)=(\h_m/\F e_{2m+1})^{\ast}
\end{equation}
and
\begin{equation}\label{o2}
H^2(\h_m)=\bigwedge^{2}(\h_m/\F e_{2m+1})^{\ast}/(d
  e_{2m+1}^{\ast}).
\end{equation}
For $1\leq i<j\leq m$,
put
$\delta_{\lambda_i=\pm\lambda_j}=1$ when $\lambda_i=\lambda_j$ or  $\lambda_i=-\lambda_j$ (mod $p$) and $\delta_{\lambda_i=\pm\lambda_j}=0$, otherwise.
By Lemma \ref{1.1} and Eqs. (\ref{o1})-(\ref{o2}), we get the following theorem.

\begin{theorem}
  \label{maintheorem1} 
  For $m\ge 1$ and $\lambda\in(\F^{\times})^{m}$,
  
(1)    $H^{1}(\h_{m}^{\lambda})= \mathbb{F}e^{2m+2}$;

 (2)
 a basis of $H^{2}(\h_{m}^{\lambda})$ consists of  the
  classes of the cocycles
\begin{eqnarray*}
% \nonumber to remove numbering (before each equation)
   &&\{\delta_{\lambda_i=\pm\lambda_j}\left(e^{i,j}-\lambda_i\lambda_j^{-1}e^{m+i,m+j}\right), \delta_{\lambda_i=\pm\lambda_j}\left(e^{i,m+j}-\lambda_i\lambda_j^{-1}e^{m+i,j}\right)\\
   &&\mid 1\leq i<j\leq m\} \bigcup\{ e^{i,m+i}\mid 1\leq i\leq m-1\}.
\end{eqnarray*}
 In particular,
$$\dim H^{2}(\h_{m}^{\lambda})=2Card\{(i,j)\mid \lambda_i=\pm \lambda_j,1\leq i<j\leq m\}+m-1.$$
\end{theorem}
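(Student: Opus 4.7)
The plan is to apply Lemma~\ref{1.1} with $k=1,2$ and identify the two summands
\[H^{k}(\h_m)^{\F e_{2m+2}}\oplus H^{1}(\F e_{2m+2},H^{k-1}(\h_m))\]
using the descriptions (\ref{o1}) and (\ref{o2}) of $H^{s}(\h_m)$. Since $\F e_{2m+2}$ is one-dimensional, $H^{1}(\F e_{2m+2},V)=V/(e_{2m+2}\cdot V)$ for any module $V$. I would first record the action of $e_{2m+2}$ on $\h_m^{*}$ induced by the defining brackets,
\[e_{2m+2}\cdot e^{i}=-\lambda_i e^{m+i},\quad e_{2m+2}\cdot e^{m+i}=-\lambda_i e^{i},\quad e_{2m+2}\cdot e^{2m+1}=0,\]
and extend it as a derivation to $\wedge^{2}\h_m^{*}$.

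For part (1), $H^{0}(\h_m)=\F$ is trivial, giving $H^{1}(\F e_{2m+2},\F)=\F e^{2m+2}$. On each two-dimensional block $\spn\{e^{i},e^{m+i}\}$ of $H^{1}(\h_m)$, the action above is an antidiagonal matrix with nonzero entries $-\lambda_i$, hence invertible. Consequently both $H^{1}(\h_m)^{\F e_{2m+2}}=0$ and $H^{1}(\F e_{2m+2},H^{1}(\h_m))=0$, which proves (1) and reduces (2) to computing $H^{2}(\h_m)^{\F e_{2m+2}}$.

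For part (2) I would decompose $H^{2}(\h_m)$ into $e_{2m+2}$-invariant blocks. The diagonal cocycles $e^{i,m+i}$ are each fixed by $e_{2m+2}$ since both Leibniz terms vanish, and they satisfy the single relation $\sum_{i}e^{i,m+i}=0$, contributing the $m-1$ invariant classes $e^{i,m+i}$ with $1\le i\le m-1$. For each pair $1\le i<j\le m$ the four-dimensional subspace $V_{ij}=\spn\{e^{i,j},e^{m+i,m+j},e^{i,m+j},e^{j,m+i}\}$ is $e_{2m+2}$-invariant, and the operator on $V_{ij}$ swaps $\spn\{e^{i,j},e^{m+i,m+j}\}$ with $\spn\{e^{i,m+j},e^{j,m+i}\}$. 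Writing this swap in matrix form, the kernel condition decouples into two $2\times 2$ systems, each with determinant $\lambda_i^{2}-\lambda_j^{2}$. Thus $V_{ij}$ has a nontrivial invariant precisely when $\lambda_i=\pm\lambda_j$ in $\F$, in which case the invariant subspace is two-dimensional, spanned by $e^{i,j}-\lambda_i\lambda_j^{-1}e^{m+i,m+j}$ and $e^{i,m+j}-\lambda_i\lambda_j^{-1}e^{m+i,j}$ (using $e^{m+i,j}=-e^{j,m+i}$). Summing contributions over all pairs and over the diagonal block yields the stated basis and dimension formula.

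The main technical hurdle is this four-dimensional block analysis: bookkeeping the signs from reorderings such as $e^{m+i}\wedge e^{j}=-e^{j,m+i}$ so that the single coefficient $\lambda_i\lambda_j^{-1}$ encodes uniformly the two cases $\lambda_i=\lambda_j$ and $\lambda_i=-\lambda_j$, and double-checking the $p=2$ situation where the two signs coincide. Everything else is elementary linear algebra applied to the Hochschild-Serre decomposition.
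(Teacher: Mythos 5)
Your proposal is correct and follows exactly the route the paper intends: the paper gives no written argument beyond the remark that the theorem follows from Lemma~\ref{1.1} together with (\ref{o1})--(\ref{o2}), and your computation of the $e_{2m+2}$-action on $H^{1}(\h_m)$ and $H^{2}(\h_m)$, the vanishing of the invariants and coinvariants in degree one, and the block-by-block kernel analysis with determinant $\lambda_i^{2}-\lambda_j^{2}$ supplies precisely the missing details. The sign bookkeeping (including $e^{m+i,j}=-e^{j,m+i}$ and the identity $\lambda_j\lambda_i^{-1}=\lambda_i\lambda_j^{-1}$ when $\lambda_i^{2}=\lambda_j^{2}$) checks out, so the stated basis and dimension formula follow.
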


\begin{rem}
Another approach for determining the cohomology $H^{1}(\h_{m}^{\lambda})$
is the definition of the first cohomology \cite[(1.4.2)]{F}:
$$H^{1}(\h_{m}^{\lambda})=(\h_{m}^{\lambda}/[\h_{m}^{\lambda},\h_{m}^{\lambda}])^{\ast}=(\h_{m}^{\lambda}/\h_{m})^{\ast}=\F e^{2m+2}.$$
\end{rem}

\section{Restricted structures on the modular Lie algebra
  $\mathfrak{h}^{\lambda}_m$}
In this section, we classify all restricted structures on the twisted Heisenberg Lie
algebra over a field of prime characteristic.

For $e_{2m+2}$,
we have
$$(\ad e_{2m+2})^{p}: \left\{
                             \begin{array}{ll}
                               e_i\mapsto \lambda^{2}_i e_{i} ,  e_{m+i}\mapsto \lambda^{2}_i e_{m+i}, & \hbox{$p=2$;}\\
                              e_i\mapsto \lambda^{p}_i e_{m+i}, e_{m+i}\mapsto \lambda^{p}_i e_i , & \hbox{$p>2$;}                              
                             \end{array}
                                                                                   \right.$$
where $1\leq i\leq m$. So  $(\mathrm{ad}\ e_{2m+2})^{p}$ is an inner derivation if and only if $p>2$ and $\lambda^{p-1}_1=\cdots=\lambda^{p-1}_m$. In this case, we denote by $|\lambda|=\lambda^{p-1}_i$ for $1\leq i\leq m$ and we have $(\mathrm{ad}\ e_{2m+2})^{p}=\mathrm{ad}\ (|\lambda|e_{2m+2})$.  When $p>2$,
we have $(\mathrm{ad}\ e_i)^{p}=0$ for $1\leq i\leq 2m+1$. By Jacobson's theorem \cite{J}, we have the following theorem:
\begin{theorem}
For $m\ge 1$ and $\lambda\in(\F^{\times})^{m}$,
 $\mathfrak{h}^{\lambda}_m$ is restrictable if and only if $p>2$ and $\lambda^{p-1}_1=\cdots=\lambda^{p-1}_m$.
\end{theorem}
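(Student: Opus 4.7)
The proof is a direct application of Jacobson's criterion: $\h_m^\lambda$ is restrictable iff $(\ad e_k)^p$ is an inner derivation for every basis vector $e_k$. The calculation preceding the statement already records the essential data for $k = 2m+2$, so the plan is to (i) dispose of the non-twisting generators, and (ii) decide when $(\ad e_{2m+2})^p$ is inner, separately for $p = 2$ and for odd $p$.

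For $1 \le k \le 2m+1$ and any odd $p \ge 3$, I would verify by direct computation on the bracket table that $\ad e_k$ is nilpotent of index at most $3$ on $\h_m^\lambda$ (the longest chain being $e_{2m+2} \mapsto -\lambda_k e_{m+k} \mapsto -\lambda_k e_{2m+1} \mapsto 0$ for $k \le m$, with symmetric or trivial behaviour in the remaining sub-cases). Hence $(\ad e_k)^p = 0 = \ad 0$ is trivially inner for odd $p$, and restrictability is controlled entirely by $e_{2m+2}$.

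For the twisting generator, the displayed formula gives $(\ad e_{2m+2})^p$ either as the scaling $e_i \mapsto \lambda_i^2 e_i$ (if $p = 2$) or as the twisted swap $e_i \mapsto \lambda_i^p e_{m+i}$, $e_{m+i} \mapsto \lambda_i^p e_i$ (if $p$ is odd). In the $p = 2$ case I would observe that for any $x = \sum c_j e_j \in \h_m^\lambda$ the bracket $[x, e_i]$ lies in $\F e_{m+i} + \F e_{2m+1}$ and has no $e_i$-component, so the scaling cannot be realised by any $\ad x$; thus restrictability fails. In the odd case I would set $\ad x = (\ad e_{2m+2})^p$ for some $x = \sum c_j e_j$, compute $[x, e_i] = -c_{m+i} e_{2m+1} + c_{2m+2} \lambda_i e_{m+i}$ and match this against $\lambda_i^p e_{m+i}$ (and similarly for $[x, e_{m+i}]$); this forces $c_i = c_{m+i} = 0$ and $c_{2m+2} \lambda_i = \lambda_i^p$ for every $i$. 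The system is consistent precisely when $\lambda_1^{p-1} = \cdots = \lambda_m^{p-1}$, and $x = |\lambda| e_{2m+2}$ then witnesses $(\ad e_{2m+2})^p = \ad(|\lambda| e_{2m+2})$.

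The argument involves no serious obstacle; the only subtlety worth spelling out is the $p = 2$ non-innerness step, which is a one-line inspection of the bracket table showing that every inner derivation of $\h_m^\lambda$ sends $e_i$ into $\F e_{m+i} + \F e_{2m+1}$, ruling out the semisimple scaling produced by $(\ad e_{2m+2})^2$.
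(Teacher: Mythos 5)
Your proposal is correct and follows essentially the same route as the paper: apply Jacobson's criterion, note that $\ad e_k$ is nilpotent of index at most $3$ for $1\le k\le 2m+1$ so $(\ad e_k)^p=0$ when $p>2$, and reduce everything to deciding when $(\ad e_{2m+2})^p$ is inner. The only difference is that you spell out the innerness/non-innerness verifications (the $p=2$ obstruction and the linear system forcing $c_{2m+2}=\lambda_i^{p-1}$ for all $i$) that the paper merely asserts, and these details check out.
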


From now on, we assume $p > 2$ and $\lambda^{p-1}_1=\cdots=\lambda^{p-1}_m=|\lambda|$.  
We can use Jacobson's theorem \cite{J} and
define a restricted Lie algebra $[p]$-operator on $\mathfrak{h}^{\lambda}_m$ by
choosing $e_i^{[p]}\in \mathfrak{h}^{\lambda}_m$ so that
$\ad e_{2m+2}^{[p]}=(\mathrm{ad}\ e_{2m+2})^{p}=\mathrm{ad}\ (|\lambda|e_{2m+2})$ and $\ad e_i^{[p]}=(\ad e_i)^p=0$  
for all $i$. For each $i$, we
choose a scalar $\mu_i\in \F$ and set

$$e_i^{[p]}=\mu_i e_{2m+1},\quad
e_{2m+2}^{[p]}=|\lambda|e_{2m+2}+\mu_{2m+2} e_{2m+1}.$$
We let
$\mu=(\mu_1,\dots, \mu_{2m+2})$, and we denote the
corresponding restricted Lie algebra $\mathfrak{h}_m^{\lambda,\mu}$.

Since
$[[[\h_m^{\lambda,\mu},\h_m^{\lambda,\mu}],\h_m],\h^{\lambda,\mu}_m]=0$,
by the definition of restricted Lie algebras, we have
$s_i(g,h)=0$ for $i\geq 3$, and
$$s_1(g,h)=[g,\underbrace{h,\ldots,h}_{p-1}],\quad 2s_2(g,h)=[g,\underbrace{h,\ldots,h}_{p-2},g]$$
for all $g\in \mathfrak{h}_{m}$ and 
$h\in \mathfrak{h}_{m}^{\lambda,\mu}$.
 From this we
get that if $g=\sum^{2m+2}_{i=1} a_i e_i\in \mathfrak{h}_m^{\lambda,\mu}$,
then
\begin{align}\label{res}
      \begin{split}
  g^{[p]}&=a_{2m+2}^{p-1}|\lambda|\sum_{i=1}^{2m}a_ie_i+a_{2m+2}^{p}|\lambda|e_{2m+2}\\
  &+\left(\sum^{2m+2}_{i=1}a^{p}_i\mu_i+2^{-1}a_{2m+2}^{p-2}\sum_{i=1}^{m}\lambda_{i}^{p-2}(a_i^{2}-a_{m+i}^{2})\right) e_{2m+1}.
  \end{split}
\end{align}

Denote by $E_{ij}$ the $(2m+1)\times (2m+1)$ matrix with a single nonzero
 entry equal to $1$ in the $(i,j)$-position.
 
\begin{theorem}\label{1}
  Suppose that there are two $p$-operators $\cdot^{[p]}$ and $-^{[p]'}$
  on $\mathfrak{h}^{\lambda}_{m}$ defined by two linear forms
  $\mu,\mu':\h^{\lambda}_m\to\F$, respectively.  Then
  $\mathfrak{h}_{m}^{\lambda,\mu}$ and $\mathfrak{h}_{m}^{\lambda,\mu'}$ are
  isomorphic if and only if 
  there exist an invertible   matrix
  $A=(a_{ij})\in M_{2m+1}(\F)$,
  $(k_1,\dots, k_{2m}, k_{2m+2})\in\F^{2m+1}$ and $k_{2m+1}\in\F^\times$, such that the equations hold for any $i=1,\ldots,m$; $j=1,\ldots,2m,2m+2$ and 
    $(a_1,\ldots, a_{2m+2})\in \F^{2m+2}$, 
    \begin{itemize}

  \item[(1)]  \begin{align*}
  A\left(
    \begin{array}{ccc}
      0 & I_m & 0\\
      -I_m & 0 & 0\\
      0 & 0 & 0\\
    \end{array}
  \right)A^{t} =&k_{2m+1}\left(
    \begin{array}{ccc}
      0 & I_m & 0\\
      -I_m & 0 & 0\\
      0 & 0 & 0\\
    \end{array}
  \right)  \\
  &+ \sum_{i=1}^{m} k_i\lambda_i(E_{2m+1,m+i}-E_{m+i,2m+1})\\
  &+ \sum_{i=1}^{m} k_{m+i}\lambda_i(E_{2m+1,i}-E_{i,2m+1}) ;
    \end{align*}

  \item[(2)]  
    \begin{align*}  
   \lambda_iA(E_{2m+1,m+i}-E_{m+i,2m+1})A^{t} =&\sum_{j=1}^{m}(E_{2m+1,m+j}-E_{m+j,2m+1})\lambda_ja_{j,i}\\
  &+\sum_{j=1}^{m}(E_{2m+1,j}-E_{j,2m+1})\lambda_ja_{m+j,i};
      \end{align*}

  \item[(3)]   
     \begin{align*}  
  \lambda_iA(E_{2m+1,i}-E_{i,2m+1})A^{t} =&\sum_{j=1}^{m}(E_{2m+1,m+j}-E_{m+j,2m+1})\lambda_ja_{j,m+i}\\
  &+\sum_{j=1}^{m}(E_{2m+1,j}-E_{j,2m+1})\lambda_ja_{m+j,m+i};
      \end{align*}

\item[(4)]  
       \begin{align*}  
  \sum_{i=1}^{m}(E_{2m+1,m+i}-E_{m+i,2m+1})\lambda_ia_{i,2m+2}
  +\sum_{i=1}^{m}(E_{2m+1,i}-E_{i,2m+1})\lambda_ia_{m+i,2m+2}=0;
      \end{align*}    
      
  \item[(5)]     
 $a_{2m+2}^{p-1}\mathfrak{a}_j=\mathfrak{a}_{2m+2}^{p-1}\mathfrak{a}_j$;

      \item[(6)]
   \begin{align*}
   \begin{split}
   &a_{2m+2}^{p-1}|\lambda| \sum^{2m}_{i=1}a_ik_i+a_{2m+2}^{p}|\lambda| k_{2m+2} \\
&+k_{2m+1}\left(\sum^{2m+2}_{i=1}a^{p}_i\mu_i+2^{-1}a_{2m+2}^{p-2}\sum_{i=1}^{m}\lambda_{i}^{p-2}(a_i^{2}-a_{m+i}^{2})\right) \\
=&\sum^{2m}_{i=1}\mathfrak{a}^{p}_i\mu'_i+\mathfrak{a}_{2m+2}^{p}\mu'_{2m+2}+\left(\sum_{i=1}^{2m+2}a_ik_i\right)^{p}\mu'_{2m+1}     \\    
&+2^{-1}\mathfrak{a}_{2m+2}^{p-2}\sum_{i=1}^{m}\lambda_{i}^{p-2}(\mathfrak{a}_i^{2}-\mathfrak{a}_{m+i}^{2});
    \end{split}
    \end{align*}
          \end{itemize}
where $\mathfrak{a}_i=\sum_{j=1}^{2m}a_ja_{j,i}+a_{2m+2}a_{2m+2,i}.$
    \end{theorem}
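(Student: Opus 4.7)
The plan is to characterize a restricted isomorphism $\phi\colon\h_m^{\lambda,\mu}\to\h_m^{\lambda,\mu'}$ as a Lie algebra isomorphism that additionally intertwines the two $[p]$-operators, and to translate each of these two requirements into the list of conditions.

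\textbf{Step 1 (shape of $\phi$).} Since the derived subalgebra $[\h_m^{\lambda},\h_m^{\lambda}]=\h_m$ and the center $Z(\h_m^{\lambda})=\F e_{2m+1}$ are intrinsic invariants of the Lie algebra, any such $\phi$ must satisfy $\phi(\h_m)=\h_m$ and $\phi(e_{2m+1})=k_{2m+1}e_{2m+1}$ with $k_{2m+1}\in\F^{\times}$. Writing $I=\{1,\dots,2m,2m+2\}$, this forces
\[
\phi(e_i)=\sum_{j\in I}a_{i,j}e_j+k_ie_{2m+1}\qquad\text{for }i\in I,
\]
which produces the matrix $A=(a_{i,j})\in M_{2m+1}(\F)$ and the scalars $(k_1,\dots,k_{2m},k_{2m+2})$ appearing in the statement; bijectivity of $\phi$ corresponds to invertibility of $A$ together with $k_{2m+1}\neq 0$.

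\textbf{Step 2 (bracket identities yield (1)--(4)).} Imposing $\phi([e_a,e_b])=[\phi(e_a),\phi(e_b)]$ and expanding in coordinates: for $a,b\in\{1,\dots,2m\}$, matching the $e_{2m+1}$-coefficient together with the $e_1,\dots,e_{2m}$-coefficients --- the latter coming from cross-terms in which the $e_{2m+2}$-part of $\phi(e_a)$ or $\phi(e_b)$ is bracketed with a non-central piece of the other --- produces condition (1); the correction terms $k_i\lambda_i(E_{2m+1,m+i}-E_{m+i,2m+1})$ record exactly those cross contributions. The identities $[e_{2m+2},e_i]=\lambda_ie_{m+i}$ and $[e_{2m+2},e_{m+i}]=\lambda_ie_i$ yield conditions (2) and (3), respectively. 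Condition (4) comes from the vanishing of the antisymmetric combinations that appear when $b=2m+2$ is used in the same computation, equivalently, from the compatibility imposed on the $e_{2m+2}$-column of $A$ by the adjoint action of $e_{2m+2}$.

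\textbf{Step 3 ($[p]$-operator intertwining yields (5) and (6)).} For an arbitrary $g=\sum_i a_ie_i$ I apply the formula (\ref{res}) to both sides of $\phi(g^{[p]})=\phi(g)^{[p]'}$. On the left,
\[
\phi(g^{[p]})=a_{2m+2}^{p-1}|\lambda|\sum_{j\in I}\mathfrak{a}_je_j+L\cdot e_{2m+1},
\]
with $\mathfrak{a}_j$ as in the statement and $L$ obtained by applying $\phi$ to the $e_{2m+1}$-coefficient from (\ref{res}). On the right, $\phi(g)=\sum_{j\in I}\mathfrak{a}_je_j+\kappa\, e_{2m+1}$ has non-central coefficients $\mathfrak{a}_j$, so a second application of (\ref{res}) --- now with $\mu'$ in place of $\mu$ and with the $\mathfrak{a}_j$ in place of the $a_j$ --- gives
\[
\phi(g)^{[p]'}=\mathfrak{a}_{2m+2}^{p-1}|\lambda|\sum_{j\in I}\mathfrak{a}_je_j+R\cdot e_{2m+1}.
\]
Equating coefficients of $e_j$ for $j\in I$ yields exactly condition (5), and equating the $e_{2m+1}$-coefficient $L=R$ yields condition (6), with the explicit $\mu$, $\mu'$, and $\lambda_i^{p-2}$ terms being the ones displayed there.

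\textbf{Converse and main obstacle.} Each step is reversible: any data $(A,(k_i),k_{2m+1})$ satisfying (1)--(6) defines via Step 1 a linear map $\phi$ which is a Lie algebra morphism by Step 2 and intertwines the $[p]$-operators by Step 3, and is bijective by the invertibility assumption. The principal obstacle is Step 3: although $a_{2m+2}$ is the coefficient of $e_{2m+2}$ in $g$, the corresponding coefficient in $\phi(g)$ is $\mathfrak{a}_{2m+2}$, a quadratic expression in the $a_i$, so the two $p$-th power formulas operate on different ``base scalars'' and reconciling them requires the identities in (5). For the $e_{2m+1}$-coefficient giving (6), the cross-terms $\lambda_i^{p-2}(\mathfrak{a}_i^2-\mathfrak{a}_{m+i}^2)$ must be expanded in the $a_i$ and matched against $2^{-1}a_{2m+2}^{p-2}\sum\lambda_i^{p-2}(a_i^2-a_{m+i}^2)$, using the bracket identities (1)--(4) to cancel the unwanted mixed products. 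This bookkeeping is the bulk of the work.
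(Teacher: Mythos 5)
Your proposal is correct and takes essentially the same route as the paper's proof: express the isomorphism in coordinates via an invertible matrix $A$ and a vector $k$ (using that the centre $\F e_{2m+1}$ is preserved), match Lie brackets coefficient-by-coefficient to obtain conditions (1)--(4), and compare $\Psi(g^{[p]})$ with $\Psi(g)^{[p]'}$ using formula (\ref{res}) to obtain (5)--(6), with the converse following by reversing the computation. The only addition is your explicit justification of the shape of $\Psi$ via the invariance of the derived subalgebra and the centre, which the paper leaves implicit.
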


\begin{proof}
  If $\Psi: \mathfrak{h}_m^{\lambda,\mu} \to \mathfrak{h}_m^{\lambda,\mu'}$ is
  a restricted Lie algebra isomorphism, then we can write
  \[\left(
      \begin{array}{c}
        \Psi(e_1) \\
        \vdots \\
        \Psi(e_{2m}) \\
        \Psi(e_{2m+2}) \\
              \end{array}
    \right) =A\left(
      \begin{array}{c}
        e_1 \\
        \vdots \\
        e_{2m} \\
        e_{2m+2}\\
      \end{array}
    \right) +\left(
      \begin{array}{c}
        k_1 \\
        \vdots \\
        k_{2m} \\
        k_{2m+2}\\
      \end{array}
    \right)e_{2m+1}\] and $\Psi(e_{2m+1})=k_{2m+1}e_{2m+1}$ where
  $A\in M_{2m+1}(\F)$ is an invertible matrix,
  $k=(k_1,\ldots,k_{2m},k_{2m+2})\in\F^{2m+1}$ and $k_{2m+1}\in\F^\times$.  If
  $g,h \in \mathfrak{h}_{m}^{\lambda,\mu}$ and we write
  $g=\sum^{2m+2}_{i=1}a_ie_i$, $h=\sum^{2m+2}_{i=1}b_ie_i$,
  then \[\Psi(g)=(a_1,\ldots,a_{2m},a_{2m+2})A\left(
      \begin{array}{c}
        e_1 \\
        \vdots \\
        e_{2m} \\
        e_{2m+2}\\
      \end{array}
    \right) +\left(\sum^{2m+2}_{i=1}a_ik_i\right)e_{2m+1}\]
  and
  \[\Psi(h)=(b_1,\ldots,b_{2m},b_{2m+2})A\left(
      \begin{array}{c}
        e_1 \\
        \vdots \\
        e_{2m} \\
        e_{2m+2}\\
      \end{array}
    \right) +\left(\sum^{2m+2}_{i=1}b_ik_i\right)e_{2m+1}.\] Moreover,
  we have

  \begin{align*}
  [\Psi(g),\Psi(h)]&=(a_1,\ldots,a_{2m},a_{2m+2}) A\left(
      \begin{array}{ccc}
        0 & I_m   & 0  \\
        -I_m & 0   & 0    \\
        0 & 0 & 0
      \end{array}
    \right)A^{t} \left(
      \begin{array}{c}
        b_1 \\
        \vdots \\
        b_{2m} \\
        b_{2m+2}
      \end{array}
    \right) e_{2m+1} \\
     &+ \sum_{i=1}^{m}(a_1,\ldots,a_{2m},a_{2m+2}) \lambda_i A(E_{2m+1,m+i}-E_{m+i,2m+1})A^{t} \left(
      \begin{array}{c}
        b_1 \\
        \vdots \\
        b_{2m} \\
        b_{2m+2}
      \end{array}
    \right)e_i\\
    &+\sum_{i=1}^{m}(a_1,\ldots,a_{2m},a_{2m+2}) \lambda_i A(E_{2m+1,i}-E_{i,2m+1})A^{t} \left(
      \begin{array}{c}
        b_1 \\
        \vdots \\
        b_{2m} \\
        b_{2m+2}
      \end{array}
    \right)e_{m+i}.
  \end{align*}    
   From $\Psi([g,h])=[\Psi(g),\Psi(h)]$, we get Eqs. (1)-(4).
     On the
  other hand, $\Psi$ preserves the restricted $[p]$-structure so that
  $\Psi(g^{[p]})=\Psi(g)^{[p]'}$ for
  $g \in \mathfrak{h}_m^{\lambda,\mu}(p)$.  If we set
  $g=\sum^{2m+2}_{i=1} a_i e_i$ and $(a_1,\ldots,a_{2m},a_{2m+2})A=(\mathfrak{a}_1,\ldots,\mathfrak{a}_{2m},\mathfrak{a}_{2m+2})$, then we have

    \begin{align}\label{2.1.1}
    \begin{split}
       \Psi(g^{[p]})&=a_{2m+2}^{p-1}|\lambda|\sum_{i=1}^{2m}\mathfrak{a}_ie_i+a_{2m+2}^{p-1}|\lambda|\mathfrak{a}_{2m+2}e_{2m+2}\\
    & +\Bigg(a_{2m+2}^{p-1}|\lambda|\sum^{2m}_{i=1}a_ik_i+a_{2m+2}^{p}|\lambda| k_{2m+2}    \\    
&+k_{2m+1}\left(\sum^{2m+2}_{i=1}a^{p}_i\mu_i+2^{-1}a_{2m+2}^{p-2}\sum_{i=1}^{m}\lambda_{i}^{p-2}(a_i^{2}-a_{m+i}^{2})\right)   
    \Bigg)e_{2m+1},
      \end{split}
  \end{align}
and
\begin{align}\label{2.2.2}
 \begin{split}
      \Psi(g)^{[p]'}&=\mathfrak{a}_{2m+2}^{p-1}|\lambda|\sum_{i=1}^{2m}\mathfrak{a}_ie_i+\mathfrak{a}_{2m+2}^{p}|\lambda|e_{2m+2}\\
    & +\Bigg(\sum^{2m}_{i=1}\mathfrak{a}^{p}_i\mu'_i+\mathfrak{a}_{2m+2}^{p}\mu'_{2m+2}+\left(\sum_{i=1}^{2m+2}a_ik_i\right)^{p}\mu'_{2m+1}   \\    
&+2^{-1}\mathfrak{a}_{2m+2}^{p-2}\sum_{i=1}^{m}\lambda_{i}^{p-2}(\mathfrak{a}_i^{2}-\mathfrak{a}_{m+i}^{2})  
    \Bigg)e_{2m+1}.
      \end{split}
\end{align}
  Comparing equations (\ref{2.1.1}) with (\ref{2.2.2}), we get Eqs. (5) and (6).
  Conversely, if there exists an invertible matrix
   $A=(a_{ij})\in M_{2m+1}(\F)$,
  $k=(k_1,\dots, k_{2m+2})\in\F^{2m+2}$ and $k_{2m+1}\in\F^\times$   which satisfy the above conditions,
  it is easy to check that the argument above is reversible and we
  obtain an isomorphism between the restricted Lie algebras
  $\mathfrak{h}_m^{\lambda,\mu}$ and $\mathfrak{h}_m^{\lambda,\mu'}$.
\end{proof}

\begin{rem}
If $g=\sum^{2m+1}_{i=1} a_i e_i\in \mathfrak{h}_m$, (\ref{res}) gives the restricted structures on the Heisenberg Lie algebra $\mathfrak{h}_m$ \cite{EFY}:
\[g^{[p]}=\left(\sum^{2m+1}_{i=1}a^{p}_i\mu_i\right) e_{2m+1}.\]
Moreover,  $\mathfrak{h}_m$ is a restricted ideal of $   \mathfrak{h}^{\lambda,\mu}_m$.
\end{rem}

\section{Restricted cohomology $H^q_*(\h_m^{\lambda,\mu})$ for $q=1,2$}

By definition, $[\h_m^{\lambda,\mu},\h_m^{\lambda,\mu}]= \h_{m}$, and the
$[p]$-operator formula (\ref{res})  imply
\[ [\h_m^{\lambda,\mu},\h_m^{\lambda,\mu}]+\langle (\h_m^{\lambda,\mu})^{[p]}\rangle_\F=
\h_m^{\lambda,\mu}.\]
 For any restricted Lie algebra $\g$,
 \cite[Theorem 2.1]{H} states that
\[
  H^1_*(\g)=(\g/([\g,\g]+\langle \g^{[p]}\rangle_\F))^*.\] It
follows that $H^1_*(\h_m^{\lambda,\mu})=0$, and the
six-term exact sequence (\ref{sixterm}) decouples to the exact
sequence

\begin{diagram}%[LaTeXeqno\label{decoupled}
  0 &\rTo &\F e^{2m+2}&\rTo &\Hom_{\rm Fr}(\h_m^{\lambda,\mu},\F)&\rTo& H^2_*(\h_m^{\lambda,\mu})& \rTo \\
  &    \rTo & H^2(\h_m^{\lambda,\mu}) &\rTo^\Delta&\Hom_{\rm Fr}(\h_m^{\lambda,\mu},\F e^{2m+2}). 
\end{diagram}
If we let $g=\sum a_ie_i\in\h_m^{\lambda,\mu}$ and $\omega:\h_m^{\lambda,\mu}\to \F$ be
any $\phi$-compatible map to a 2-cocycle $\phi$, then  Theorem \ref{maintheorem1} (2)  and (\ref{delta}) give
\begin{eqnarray*}
  \Delta_\phi(g)\cdot e_{2m+2} 
                         &=&\varphi(g\wedge [e_{2m+2},\underbrace{g,\ldots,g}_{p-1}])-\phi(g^{[p]}\wedge
                             e_{2m+2})\\
                        &=&-a_{2m+2}^{p-2}|\lambda|\varphi(g,g)\\
                         &=&0.                                                                               
\end{eqnarray*}
Then  $\Delta=0$, and we have a splitting exact sequence
\begin{diagram}
  0&\rTo& \Hom_{\rm Fr}(\h_m^{\lambda,\mu},\F)/\F \overline e^{2m+2}& \rTo & H^2_*(\h_m^{\lambda,\mu}
  )
   &\rTo & H^2(\h_m^{\lambda,\mu} ) &\rTo&0,
\end{diagram}
where the mapping $ \Hom_{\rm Fr}(\h_m^{\lambda,\mu},\F)/\F \overline e^{2m+2} \to  H^2_*(\h_m^{\lambda,\mu}) $ sends $\omega$ to the
class of $ (0,\omega)$ and the image is a $(2m+1)$-dimensional
subspace of $ H^2_*(\h_m^{\lambda,\mu})$ spanned by the classes $(0,\overline
e^i)$ for $1\leq i\leq 2m+1$. It follows that
 \[H^2_*(\h_m^{\lambda,\mu})\cong\Hom_{\rm Fr}(\h_m^{\lambda,\mu},\F)/\F \overline e^{2m+2}\oplus
    H^2(\h_m^{\lambda,\mu}).\] 
Theorem \ref{maintheorem1} (2) implies the following theorem:

\begin{theorem}
  \label{maintheorem}
  For $m\ge 1$, $\lambda\in(\F^{\times})^{m}$ and any form $\mu:\h^{\lambda,\mu}_m\to\F$,
$$\dim H^2_*(\h_m^{\lambda,\mu})=2Card\{(i,j)\mid \lambda_i=\pm \lambda_j,1\leq i<j\leq m\}+3m$$
  and a basis consists of the
  classes of the cocycles
  \[ \{ \delta_{\lambda_i=\pm\lambda_j}(e^{i,j}-\lambda_i\lambda^{-1}_je^{m+i,m+j},
  \widetilde{e^{i,j}}-\lambda_i\lambda^{-1}_j\widetilde{e^{m+i,m+j}}),
    \]
  \[  \delta_{\lambda_i=\pm\lambda_j} (e^{i,m+j}-\lambda_i \lambda^{-1}_je^{m+i,j}, \widetilde{ e^{i,m+j}}-\lambda_i\lambda^{-1}_j\widetilde{e^{m+i,j}})   
  \mid1\leq i<j\leq m\} \]
  \[\bigcup \{(e^{i,m+i},\widetilde{e^{i,m+i}})\ |\ 1\le i\le m-1\}\bigcup \{(0,\overline e^i)\ |\ 1\le i\le 2m+1\}.\]
\end{theorem}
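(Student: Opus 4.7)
The entire structural work has been done in the discussion preceding the theorem: the vanishing $H^1_*(\h_m^{\lambda,\mu}) = 0$ together with $\Delta = 0$ reduces the six-term sequence to the splitting
\[ H^2_*(\h_m^{\lambda,\mu}) \;\cong\; \Hom_{\rm Fr}(\h_m^{\lambda,\mu},\F)/\F\,\overline{e}^{2m+2} \;\oplus\; H^2(\h_m^{\lambda,\mu}). \]
The plan is simply to exhibit bases of the two summands, lift them to honest classes in $H^2_*$, and add dimensions.

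For the Frobenius summand, the standard basis $\{\overline e^{\,i}\}_{i=1}^{2m+2}$ of $\Hom_{\rm Fr}(\h_m^{\lambda,\mu},\F)$ descends to the $2m+1$ cosets $\{\overline e^{\,i}\}_{i=1}^{2m+1}$, which correspond under the splitting to the classes $(0,\overline e^{\,i}) \in H^2_*(\h_m^{\lambda,\mu})$. These are automatically cocycles: any Frobenius homomorphism is $0$-compatible, so $d^2_*(0,\omega) = (0,\ind^2(0,\omega)) = (0,0)$.

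For the $H^2$ summand, the plan is to lift each representative $\phi$ from the basis of Theorem \ref{maintheorem1}(2) to the restricted cochain $(\phi,\widetilde\phi)$, where $\widetilde\phi$ is the unique $\phi$-compatible map vanishing on the ordered basis $\{e_i\}$ described in Section 2.3. Linearity of $\widetilde{\cdot}$ converts the lift of $e^{i,j} - \lambda_i\lambda_j^{-1}e^{m+i,m+j}$ into $\widetilde{e^{i,j}} - \lambda_i\lambda_j^{-1}\widetilde{e^{m+i,m+j}}$, and similarly for the remaining cocycles, reproducing the displayed formulas. To see $(\phi,\widetilde\phi)$ is a cocycle one computes $d^2_*(\phi,\widetilde\phi) = (d^2\phi,\ind^2(\phi,\widetilde\phi)) = (0,\ind^2(\phi,\widetilde\phi))$; the second component vanishes because the identity $\Delta_\phi(h)(g) = \ind^2(\phi,\omega)(g,h)$ (valid for any $\phi$-compatible $\omega$ by antisymmetry of $\phi$) combined with the computation $\Delta = 0$ already performed in the preceding paragraphs forces $\ind^2(\phi,\widetilde\phi) = 0$.

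Combining the $2m+1$ classes from the Frobenius summand with the lifted basis of $H^2(\h_m^{\lambda,\mu})$ yields the asserted basis of $H^2_*(\h_m^{\lambda,\mu})$, and the dimension count
\[ (2m+1) + \dim H^2(\h_m^{\lambda,\mu}) = 2\,\mathrm{Card}\{(i,j)\mid \lambda_i = \pm\lambda_j,\ 1 \le i < j \le m\} + 3m \]
follows from Theorem \ref{maintheorem1}(2). The only nontrivial input is the vanishing of $\ind^2$ on these cocycles, which has already been dispatched via the $\Delta = 0$ computation above; the remainder is organizational.
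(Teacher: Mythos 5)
Your proposal is correct and follows essentially the same route as the paper, which likewise derives the theorem from the discussion preceding it: $H^1_*(\h_m^{\lambda,\mu})=0$, the vanishing of $\Delta$, the resulting split short exact sequence, and the dimension count from Theorem~\ref{maintheorem1}(2). Your observation that $\ind^2(\phi,\omega)(g,h)=\Delta_\phi(h)\cdot g$ by antisymmetry, so that $\Delta=0$ forces the lifts $(\phi,\widetilde\phi)$ to be restricted cocycles, is a detail the paper leaves implicit, and it is a correct and welcome addition.
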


\section{Restricted one-dimensional central extensions of
  $\h_m^{\lambda,\mu}$}
Similar to the case of  ordinary Lie
algebras
[\cite{F}, Chapter 1, Section 4.6], restricted one-dimensional
central extensions of a restricted Lie algebra $\g$
are parameterized by the restricted cohomology group $H^2_*(\g)$
[\cite{H}, Theorem 3.3]. If $(\phi,\omega) \in C^2_{*}(\g)$, then the corresponding restricted
one-dimensional central extension $\mathfrak{G}=\g\oplus\F c$ has Lie bracket and
$[p]$-operator defined by
\begin{align}\label{genonedimext}
  \begin{split}
  [g,h]_\mathfrak{G}&=[g,h]_{\g} + \phi(g\wedge h)c\\
  [g,c]_\mathfrak{G}&=0\\
  g^{[p]_\mathfrak{G}}&=g^{[p]_{\g}} + \omega(g)c\\
  c^{[p]_\mathfrak{G}}&=0
  \end{split}
\end{align}
where $[\cdot,\cdot]_{\g}$ and $\cdot^{{[p]}_{\g}}$ denote the Lie
bracket and $[p]$-operator in $\g$, respectively (\cite{EFu},
equations (26) and (27)). We can the equations (\ref{genonedimext}) together with
Theorem~\ref{maintheorem} to explicitly describe the restricted
one-dimensional central extensions of $\h_m^{\lambda,\mu}$. Let $g=\sum a_ie_i$ and $h=\sum b_ie_i$ denote two arbitrary elements of
$\h_m^{\lambda,\mu}$. For $1\leq s<t\leq 2m$, the equation (\ref{starprop}) gives

\begin{align}\label{p}
\begin{split}
  \widetilde{e^{s,t}}
  (g)&=  \widetilde{e^{s,t}}
  \left(\sum_{i=1}^{2m+1}a_ie_i+a_{2m+2}e_{2m+2}\right)\\
  &=2^{-1}a_{2m+2}^{p-2}\sum_{i,j=1}^{2m}a_ia_je^{s,t}([e_i,\underbrace{e_{2m+2},\ldots,e_{2m+2}}_{p-2}],e_j)\\
        &= -2^{-1}a_{2m+2}^{p-2}
       \bigg(
       \sum_{i=1}^{m}\sum_{j=1}^{2m}\lambda_{i}^{p-2}a_i a_j
       e^{s,t}(e_{m+i},e_j) \\
       &+
       \sum_{i=1}^{m}\sum_{j=1}^{2m}\lambda_{i}^{p-2}a_{m+i}a_je^{s,t}(e_{i},e_j) \bigg).
 \end{split}
\end{align}

If $1\le i<j\le m$ such that $\lambda_i=\lambda_j$ or $\lambda_i=-\lambda_j$, and $\mathfrak{H}_{ij}=\h_m^{\lambda.\mu}\oplus \F c$ denotes
the one-dimensional restricted central extension of $\h_m^{\lambda,\mu}$
determined by the cohomology class of the restricted cocycle
$$ \left(e^{i,j}-\lambda_i\lambda^{-1}_je^{m+i,m+j},
  \widetilde{e^{i,j}}-\lambda_i\lambda^{-1}_j\widetilde{e^{m+i,m+j}}\right),$$
   then (\ref{genonedimext}) and (\ref{p}) give the bracket and
$[p]$-operator in $\mathfrak{H}_{ij}$:
\begin{align*}
  \begin{split}
    [g,h]_{ \mathfrak{H}_{ij}}& =[g,h]_{\h_m^{\lambda,\mu}}+(a_ib_j-a_jb_i-\lambda_i\lambda^{-1}_ja_{m+i}b_{m+j}+\lambda_i\lambda^{-1}_ja_{m+j}b_{m+i})c;\\
    g^{[p]_{\mathfrak{H}_{ij}}} & = g^{[p]_{\h_m^{\lambda,\mu}}}-2^{-1}a_{2m+2}^{p-2}(\lambda^{p-2}_{i} a_{m+i}a_{j}-\lambda_{j}^{p-2}a_{m+j}a_{i}-|\lambda|\lambda_j^{-1}a_i a_{m+j}\\
    &+\lambda_i\lambda_j^{p-3}a_ja_{m+i}) c.
  \end{split}
\end{align*}

If $1\le i<j\le m$ such that $\lambda_i=\lambda_j$ or $\lambda_i=-\lambda_j$,  and $\mathfrak{H}_{i,m+j}=\h_m^{\lambda.\mu}\oplus \F c$ denotes
the one-dimensional restricted central extension of $\h_m^{\lambda,\mu}$
determined by the cohomology class of the restricted cocycle
$$(e^{i,m+j}-\lambda_i \lambda^{-1}_je^{m+i,j}, \widetilde{ e^{i,m+j}}-\lambda_i\lambda^{-1}_j\widetilde{e^{m+i,j}}),$$
then (\ref{genonedimext}) and (\ref{p}) give the bracket and
$[p]$-operator in $\mathfrak{H}_{i,m+j}$:
\begin{align*}
  \begin{split}
    [g,h]_{\mathfrak{H}_{i,m+j}} & =[g,h]_{\h_m^{\lambda,\mu}}+(a_ib_{m+j}-a_{m+j}b_i-\lambda_i\lambda^{-1}_ja_{m+i}b_{j}+\lambda_i\lambda^{-1}_ja_{j}b_{m+i})c;\\
      g^{[p]_{\mathfrak{H}_{i,m+j}}} & = g^{[p]_{\h_m^{\lambda,\mu}}}-2^{-1}a_{2m+2}^{p-2}(\lambda^{p-2}_{i} a_{m+i}a_{m+j}-\lambda_{j}^{p-2}a_{j}a_{i}-|\lambda|\lambda_j^{-1}a_i a_{j}\\
    &+\lambda_i\lambda_j^{p-3}a_{m+j}a_{m+i}) c.
  \end{split}
\end{align*}

If $1\le i\le m-1$ and $\mathfrak{H}_{i,m+i}=\h_m^{\lambda.\mu}\oplus \F c$ denotes
the one-dimensional restricted central extension of $\h_m^{\lambda,\mu}$
determined by the cohomology class of the restricted cocycle
$(e^{i,m+i},\widetilde{e^{i,m+i}})$, 
then (\ref{genonedimext}) and (\ref{p}) give the bracket and
$[p]$-operator in $\mathfrak{H}_{i,m+i}$:
\begin{align*}
  \begin{split}
    [g,h]_{\mathfrak{H}_{i,m+i}} & =[g,h]_{\h_m^{\lambda,\mu}}+(a_ib_{m+i}-a_{m+i}b_i)c;\\
      g^{[p]_{\mathfrak{H}_{i,m+i}}} & = g^{[p]_{\h_m^{\lambda,\mu}}}-2^{-1}a_{2m+2}^{p-2}\lambda^{p-2}_{i} (a^{2}_{m+i}-a_{i}^{2}) c.
  \end{split}
\end{align*}

If $1\le i\le 2m+1$ and $\mathfrak{H}_i=\h_m^{\lambda.\mu}\oplus \F c$ denotes
the one-dimensional restricted central extension of $\h_m^{\lambda,\mu}$
determined by the cohomology class of the restricted cocycle
$(0,\overline e^i)$, then (\ref{genonedimext}) gives the bracket and
$[p]$-operator in $\mathfrak{H}_i$:
\begin{align*}
  \begin{split}
    [g,h]_{\mathfrak{H}_i} & =[g,h]_{\h_m^{\lambda,\mu}};\\
    g^{[p]_{\mathfrak{H}_i}} & = g^{[p]_{\h_m^{\lambda,\mu}}}+ a_i^p c.
  \end{split}
\end{align*}
The central extensions $\mathfrak{H}_i$ form a basis for the $(2m+1)$-dimensional
space of restricted one-dimensional central extensions that split as
ordinary Lie algebra extensions (c.f. \cite{EFi2}).

\end{document}